\newcommand{\Om} {\Omega}
\newcommand {\ep} {\varepsilon}
\newcommand {\om} {\omega}
\newcommand {\gm} {\gamma}
\newcommand {\sg} {\sigma}
\newcommand {\ii} {\infty}
\newcommand {\dt} {\delta}
\newcommand {\lb} {\lambda}
\newcommand {\ol} {\overline}
\newcommand {\sm} {\setminus}
\newcommand {\su} {\subset}
\newcommand {\wt} {\widetilde}
\newcommand {\pr} {\prime}
\newcommand {\mc} {\mathcal}
\newcommand {\mbb} {\mathbb}
\newtheorem{teo}{Theorem}[section]
\newtheorem{pro}{Proposition}[section]
\newtheorem{cor}{Corollary}[section]
\theoremstyle{definition}
\newtheorem{rem}{Remark}[section]
\newtheorem{df}{Definition}[section]
\title{Almost Uniform Convergence\\ in Wiener-Wintner Ergodic Theorem}
\keywords{Wiener-Wintner theorem, almost uniform convergence, infinite measure, Besicovitch sequence}
\subjclass[2010]{37A05, 37A30}
\author{Vladimir Chilin}
\address{National University of Uzbekistan\\ Tashkent, 100174, Uzbekistan}
\email{vladimirchil@gmail.com; chilin@ucd.uz}
\author{Semyon Litvinov}
\address{Pennsylvania State University\\ 76 University Drive\\ Hazleton, PA 18202, USA}
\email{snl2@psu.edu}
\begin{document}
\begin{abstract}
We extend almost everywhere convergence in Wiener-Wintner ergodic theorem for $\sg$-finite measure to a generally stronger almost uniform convergence and present a larger, universal, space for which this convergence holds.  We then extend this result to the case with Besicovitch weights.
\end{abstract}
\date{March 23, 2020}

\maketitle
\section{Introduction and Preliminaries}
Let $(\Om,\mu)$ be a measure space. Denote by $\mc L^0$ the algebra of almost everywhere (a.e.) finite complex-valued measurable functions on $(\Om,\mu)$, and let $\mc L^p\su \mc L^0$, $1\leq p\leq\ii$, stand for the $L^p$-space on $(\Om,\mu)$ equipped with the standard norm $\|\cdot\|_p$.

A sequence $\{f_n\}\su\mc L^0$ is said to converge {\it almost uniformly (a.u.)} if there is $\widehat f\in\mc L^0$ such that, given $\ep>0$, there exists $\Om^\pr\su\Om$ satisfying conditions
\[
\mu(\Om\sm\Om^\pr)\leq\ep\text{ \ \ and \ \ }\lim_{n\to\ii}\|(\widehat f-f_n)\chi_{\Om^\pr}\|_\ii=0.
\]
It is clear that $\{f_n\}\su\mc L^0$ converges a.u. if and only if for every $\ep>0$ there exists $\Om^\pr\su\Om$ such that
\[
\mu(\Om\sm\Om^\pr)\leq\ep\text{\ \ and the sequence\ \,}\{f_n\chi_{\Om^\pr}\}\text{\ \,converges in\ \,}\mc L^\ii,
\]
that is, this sequence converges uniformly.

It is easy to see that if the measure $\mu$ is not finite, a.u. convergence is generally stronger than a.e. convergence, whereas, due to Egorov's theorem, these convergences coincide when $\mu(\Om)<\ii$.

Let $T:\Om\to\Om$ be a measure preserving transformation (m.p.t.). Given \\$\lb\in\mbb C_1=\{\lb\in\mbb C: |\lb|=1\}$ and $f\in\mc L^0$, denote
\[
M_n(T,\lb)(f)=\frac1n\sum_{k=0}^{n-1}\lb^kf\circ T^k\text{ \ \ and \ \ }M_n(T)(f)=\frac1n\sum_{k=0}^{n-1}f\circ T^k.
\]

\begin{df}
We write $f\in a.e.\,WW(\Om,T)$ ($f\in a.u.\,WW(\Om,T)$) if
\[
\exists\, \ \Om_f\su\Om \text{ \ with \ } \mu(\Om\sm\Om_f)=0 \text{ \ such that the sequence\ \,}\big\{M_n(T,\lb)(f)(\om)\big\}
\]
converges for any $\om\in\Om_f$ and $\lb\in\mbb C_1$.

\noindent
(respectively, if
\[
\forall\, \ \ep>0 \ \ \exists\, \ \Om^\pr=\Om_{f,\ep}\text{\ \,with \ }\mu(\Om\sm\Om^\pr)\leq\ep\text{ \ such that the sequence\ } \big\{M_n(T,\lb)(f)\chi_{\Om^\pr}\big\}
\]
converges uniformly for any $\lb\in\mbb C_1$).
\end{df}

Clearly, we have $a.u.\,WW\su a.e.\,WW$, but even if $\mu(\Om)<\ii$, Egorov's theorem does not entail the opposite inclusion.

The celebrated Wiener-Wintner theorem \cite{ww} asserts that $\mc L^1\su a.e.\,WW$, provided $\mu(\Om)<\ii$.
It is shown in \cite[Theorem 2.10]{as} that for a {\it uniquely ergodic system}, that is, when $\mu$ is the only invariant measure for $T$, and a continuous function the convergence in Wiener-Wintner theorem is uniform in $\Om$; a related result was previously obtained in \cite{ro}. For a review of results on uniform convergence in Wiener-Wintner-type ergodic theorems for uniquely ergodic systems and continuous functions, see \cite{ez}.

Furthermore, Assani's extension of Bourgain's Return Times theorem \cite[Theorem 5.1]{as} entails that if $(\Om,\mu)$ is $\sg$-finite, then $\mc L^p\su a.e.\,WW$, $1\leq p<\ii$.

Now, let  $(\Om,\mu)$ be a $\sg$-finite measure space, and let $T:\Om\to\Om$ be a m.p.t. The main goal of this article is to show that if $T$ is ergodic, then $\mc R_\mu\su a.u.\,WW(\Om,T)$. Here, $\mc R_\mu$ - which coincides with $\mc L^1$ if $\mu(\Om)<\ii$ - is a universal, relative to a.u. convergence of the averages $M_n(T,\lb)$, space that contains not only every space $\mc L^p$ for $1\leq p<\ii$ but also classical Banach spaces on $(\Om,\mu)$ such as Orlicz, Lorentz, and Marcinkiewicz spaces $X$ with $\chi_\Om\notin X$. \ Thus, by relaxing uniform convergence to almost uniform convergence, we gain convergence for a much wider class of functions than the class of continuous functions and without the assumption of finiteness of measure. Then we further generalize this result by expanding the family $\big\{ \{\lb^k\}:\,\lb\in\mbb C_1\big\}$ to the class of all bounded Besicovitch sequences.

In what follows, we reduce the problem to showing that $\mc L^1\su a.u.\,WW$, which in turn can be derived from the case $\mu(\Om)<\ii$ with the help of Hopf decomposition. The following Corollary \ref{c11}, a consequence of the maximal ergodic inequality, further reduces the problem to finding a set $\mc D\su a.u.\,WW$ that is dense in $\mc L^1$. To this end, we take the path of "simple inequality" as outlined in \cite{as} and employ Egorov's theorem and a form of Van der Corput's inequality to show that $\mc D=\mc L^2\su a.u.\,WW$.

For $\ol b=\{b_k\}_{k=0}^\ii\su\mbb C$, denote
\[
M_n(T,\ol b)(f)=\frac1n\sum_{k=0}^{n-1}b_kf\circ T^k.
\]
Let $\mc B$ be a subset of the set of bounded sequences $\ol b=\{b_k\}_{k=0}^\ii\su\mbb C$.

\begin{pro}\label{p11}
Let $(\Om,\mu)$ be $\sg$-finite, and let $1\leq p<\ii$. Then the set
\[
\mc C_p(\mc B)=\Big\{f\in\mc L^p:\ \forall \ \ep>0\ \ \exists \ \Om^\pr\su\Om\text{\ with\ }\mu(\Om\sm\Om^\pr)\leq\ep\text{\ such that}
\]
\[
\ \ \ \ \ \ \ \ \ \ \ \ \ \ \ \ \ \ \ \ \text{the sequence\ \,} \{M_n(T,\ol b)(f)\chi_{\Om^\pr}\}\text{\ converges uniformly\ \,}\forall \ \ol b\in\mc B\Big\}
\]
is closed in $\mc L^p$.
\end{pro}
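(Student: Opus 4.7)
Given $\{f_m\}\su\mc C_p(\mc B)$ with $f_m\to f$ in $\mc L^p$ and $\ep>0$, the plan is to construct a single set $\Om'\su\Om$ with $\mu(\Om\sm\Om')\leq\ep$ on which $\{M_n(T,\ol b)(f)\chi_{\Om'}\}_n$ is uniformly Cauchy (hence uniformly convergent) in $\mc L^\ii$ for every $\ol b\in\mc B$. The device is the usual approximation argument: for an appropriately chosen subsequence element $f_{m_k}$, the triangle inequality
\begin{align*}
\|(M_n(T,\ol b)(f)-M_{n'}(T,\ol b)(f))\chi_{\Om'}\|_\ii
&\leq \|M_n(T,\ol b)(f-f_{m_k})\chi_{\Om'}\|_\ii\\
&\quad+\|(M_n(T,\ol b)(f_{m_k})-M_{n'}(T,\ol b)(f_{m_k}))\chi_{\Om'}\|_\ii\\
&\quad+\|M_{n'}(T,\ol b)(f-f_{m_k})\chi_{\Om'}\|_\ii
\end{align*}
reduces the task to making the two outer terms uniformly small in $n$ through a maximal inequality, while the middle term is handled by the hypothesis $f_{m_k}\in\mc C_p(\mc B)$.

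\textbf{Maximal tool.} The crucial input is the Dunford--Schwartz maximal ergodic inequality on the $\sg$-finite space $(\Om,\mu)$: the operator $M^*(T)(g):=\sup_n M_n(T)(|g|)$ is of weak type $(1,1)$ and, by interpolation with its trivial $\mc L^\ii$ bound, of strong type $(p,p)$ for $1<p<\ii$. Since each $\ol b\in\mc B$ is bounded, with $C_{\ol b}:=\sup_k|b_k|<\ii$, I will use the pointwise domination
\[
\sup_n|M_n(T,\ol b)(g)|\leq C_{\ol b}\,M^*(T)(|g|),
\]
applied to $g=f-f_{m_k}$ one $\ol b$ at a time; crucially, no uniformity of $C_{\ol b}$ over $\mc B$ is required because $\Om'$ must be independent of $\ol b$ but the estimate of the outer terms may depend on $\ol b$.

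\textbf{Construction and conclusion.} Pass to a subsequence $\{f_{m_k}\}$ with $\|f-f_{m_k}\|_p$ decaying fast enough that the level sets $E_k:=\{M^*(T)(|f-f_{m_k}|)>2^{-k}\}$ satisfy $\mu(E_k)\leq\ep/2^{k+2}$; this is immediate from the weak-$(1,1)$ inequality when $p=1$ and from Chebyshev combined with the strong $(p,p)$ bound when $p>1$. For each $k$, let $\Om_{m_k}'$ be the set supplied by $f_{m_k}\in\mc C_p(\mc B)$, with $\mu(\Om\sm\Om_{m_k}')\leq\ep/2^{k+2}$, on which $\{M_n(T,\ol b)(f_{m_k})\chi_{\Om_{m_k}'}\}_n$ converges uniformly for every $\ol b\in\mc B$. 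Set
\[
\Om':=\bigcap_{k\geq 1}\big(\Om_{m_k}'\cap E_k^c\big),
\]
so that $\mu(\Om\sm\Om')\leq\ep$; on $\Om'$ one has $\sup_n|M_n(T,\ol b)(f-f_{m_k})|\leq C_{\ol b}\cdot 2^{-k}$ for every $k$ and every $\ol b$. Given $\ol b$ and $\dt>0$, I pick $k$ with $C_{\ol b}\cdot 2^{-k}<\dt/3$, and then $N$ so large that, using $\Om'\su\Om_{m_k}'$, the middle term above is $<\dt/3$ for $n,n'\geq N$; the three-term bound becomes $<\dt$, giving uniform Cauchyness. The main delicate point is precisely that a single $\Om'$ must serve every $\ol b\in\mc B$ simultaneously, which works because the $\ol b$-free function $M^*(T)$ dominates every $|M_n(T,\ol b)(\cdot)|$; all $\ol b$-dependence is pushed into $C_{\ol b}$, which affects only the choice of $k$, not of $\Om'$.
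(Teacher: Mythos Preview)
Your argument is correct. Both proofs rest on the same two ingredients—the pointwise domination $|M_n(T,\ol b)(g)|\leq C_{\ol b}\,M_n(T)(|g|)$ and the maximal ergodic inequality—but they organize the bookkeeping differently. The paper stratifies $\mc B=\bigcup_{l\ge 1}\mc B_l$ by the bound $l$ on $\sup_k|b_k|$, chooses a \emph{single} approximant $f_{k_0}$, and builds $\Om'$ by intersecting level sets $\Om_{l,1}$ over $l$ (so that the threshold $\dt/(3l)$ absorbs $C_{\ol b}\leq l$). You instead fix the accuracy scale $2^{-k}$, pick a \emph{sequence} of approximants $f_{m_k}$, and intersect over $k$ both the level sets $E_k^c$ and the sets $\Om'_{m_k}$ supplied by the hypothesis. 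Your route has two advantages: the set $\Om'$ is visibly independent of the Cauchy tolerance $\dt$ (the paper's $\Om'$ formally depends on $\dt$, so an extra layer of intersecting over $\dt=1/j$ is implicitly needed), and you never need a single $f_{k_0}$ to satisfy a family of inequalities indexed by $l$. The paper's stratification, on the other hand, makes clearer that only the weak-$(p,p)$ maximal inequality (no interpolation) is needed. Either organization proves the proposition.
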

\begin{proof}
Given $l\in\mbb N$, denote
\[
\mc B_l=\big\{\{b_k\}\in\mc B:\, |b_k|\leq l \ \ \forall\ k\big\}.
\]
Let $\{f_k\}\su\mc C_p(\mc B)$ and $f\in\mc L^p$ be such that $\|f-f_k\|_p\to 0$. Given $\ep>0$ and $\dt>0$, the maximal ergodic inequality
\[
\mu\left\{\sup\limits_nM_n(T)(|g|)>t\right\}\leq\left(2\frac{\|g\|_p}t\right)^p \ \ \forall \ \,g\in\mc L^p, \ t>0
\]
(see, for example, \cite{cl}) entails that there exists $f_{k_0}$ for which
\begin{equation}\label{e11}
\mu\left\{\sup\limits_nM_n(T)(|f-f_{k_0}|)>\frac\dt{3l}\right\}\leq\frac\ep{2^{l+1}}.
\end{equation}
Next, as
\[
|M_n(T,\ol b)(f-f_{k_0})|\leq l\, M_n(T)(|f-f_{k_0}|)\, \ \ \forall \ \ol b\in\mc B_l,
\]
inequality (\ref{e11}) implies that
\[
\mu\left\{\sup_n|M_n(T,\ol b)(f-f_{k_0})|>\frac\dt3\right\}\leq\frac\ep{2^{l+1}}\ \ \ \forall \ \ol b\in\mc B_l.
\]
Therefore, with
\[
\Om_{l,1}=\left\{\sup_n|M_n(T,\ol b)(f-f_{k_0})|\leq\frac\dt3\right\},
\]
we have $\mu(\Om\sm\Om_{l,1})\leq\displaystyle\frac\ep{2^{l+1}}$ and
\[
\|M_n(T,\ol b)(f-f_{k_0})\chi_{\Om_{l,1}}\|_\ii\leq\frac\dt3 \ \ \ \forall \ n\in\mbb N, \ \ol b\in\mc B_l.
\]
Now, letting $\Om_1=\bigcap_{l=1}^\ii\Om_{l,1}$, we obtain $\mu(\Om\sm\Om_1)\leq\displaystyle\frac\ep2$ and
\[
\|M_n(T,\ol b)(f-f_{k_0})\chi_{\Om_1}\|_\ii\leq\frac\dt3 \ \ \ \forall \ n\in\mbb N\text{ \ and \ } \ol b\in\mc B
\]

Furthermore, as $f_{k_0}\in\mc C(\mc B)$, there exists $\Om_2\su\Om$ with $\mu(\Om\sm\Om_2)\leq\displaystyle\frac\ep2$ such that the sequence $\{M_n(T,\ol b)(f_{k_0})\chi_{\Om_2}\}$ converges uniformly for all $\ol b\in\mc B$. Thus, for each $\ol b\in\mc B$, there is a number $N=N(\ol b)$ such that
\[
\left\|(M_m(T,\ol b)(f_{k_0})-M_n(T,\ol b)(f_{k_0}))\chi_{\Om_2}\right\|_\ii\leq\frac\dt3 \ \ \ \forall \ m,n\ge N.
\]
Now, setting $\Om^\pr=\Om_1\cap\Om_2$, we have $\mu(\Om\sm\Om^\pr)\leq\ep$ and, for each $\ol b\in\mc B$ and all $m,n\ge N(\ol b)$,
\[
\begin{split}
\|(M_m(T,\ol b)(f)&-M_n(T,\ol b)(f))\chi_{\Om^\pr}\|_\ii\leq\|M_m(T,\ol b)(f-f_{k_0})\chi_{\Om_1}\|_\ii\\
&+\|(M_m(T,\ol b)(f_{k_0})-M_n(T,\ol b)(f_{k_0}))\chi_{\Om_2}\|_\ii\\
&+\|M_n(T,\ol b)(f-f_{k_0})\chi_{\Om_1}\|_\ii\leq\dt,
\end{split}
\]
implying that the sequence $\{M_n(T,\ol b)(f)\chi_{\Om^\pr}\}$ converges uniformly for all $\ol b\in\mc B$, hence $f\in\mc C_p(\mc B)$.
\end{proof}

\begin{cor}\label{c11}
Let $(\Om,\mu)$ be $\sg$-finite. Then $\mc L^p\cap a.u.\,WW$ is closed in $\mc L^p$ for each $1\leq p<\ii$.
\end{cor}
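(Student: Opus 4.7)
The corollary is a direct specialization of Proposition \ref{p11}, so the plan is essentially to identify the right subset $\mc B$ of bounded sequences and verify that the spaces coincide.

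My plan is to apply Proposition \ref{p11} to the specific family
\[
\mc B=\big\{\{\lb^k\}_{k=0}^\ii:\,\lb\in\mbb C_1\big\}.
\]
This $\mc B$ consists of bounded sequences (in fact $|b_k|=1$ for every $k$ and every $\ol b\in\mc B$, so $\mc B=\mc B_1$ in the notation of the proof of Proposition \ref{p11}), so the hypothesis of the proposition is satisfied.

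With this choice, $M_n(T,\ol b)(f)$ for $\ol b=\{\lb^k\}\in\mc B$ reduces to $M_n(T,\lb)(f)$. Consequently, the defining condition for membership in $\mc C_p(\mc B)$ — namely that for every $\ep>0$ there exists $\Om^\pr\su\Om$ with $\mu(\Om\sm\Om^\pr)\leq\ep$ such that $\{M_n(T,\ol b)(f)\chi_{\Om^\pr}\}$ converges uniformly for every $\ol b\in\mc B$ — becomes precisely the condition that $f\in a.u.\,WW(\Om,T)$. Therefore $\mc C_p(\mc B)=\mc L^p\cap a.u.\,WW$.

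The only step is then to invoke Proposition \ref{p11}, which asserts that $\mc C_p(\mc B)$ is closed in $\mc L^p$; this yields the corollary. There is no real obstacle here — the verification that $\mc C_p(\mc B)$ coincides with $\mc L^p\cap a.u.\,WW$ is just a translation of definitions once the family $\mc B$ has been chosen.
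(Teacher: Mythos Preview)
Your proof is correct and is exactly the paper's approach: apply Proposition~\ref{p11} with $\mc B=\big\{\{\lb^k\}:\lb\in\mbb C_1\big\}$ and observe that $\mc C_p(\mc B)=\mc L^p\cap a.u.\,WW$.
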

\begin{proof}
Apply Proposition \ref{p11} to $\mc B=\{ \ol b=\{\lb^k\}: \,\lb\in\mbb C_1\}$.
\end{proof}

Next, let $\mc K$ be the $\|\cdot\|_2$-closure of the linear span of the set
\[
K=\left\{f\in\mc L^2:\, f\circ T=\lb_ff\text{ \ for some\, }\lb_f\in\mbb C_1\right\}.
\]

\begin{pro}\label{p12}
$\mc K\su a.u.\,WW(\Om,T)$.
\end{pro}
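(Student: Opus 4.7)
The plan is to verify the conclusion for eigenfunctions $f\in K$, extend by linearity to the linear span of $K$, and then pass to the $\|\cdot\|_2$-closure by invoking Corollary \ref{c11} with $p=2$.

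For $f\in K$ with $f\circ T=\lb_ff$, iteration gives $f\circ T^k=\lb_f^kf$, so
\[
M_n(T,\lb)(f)=f\cdot A_n(\lb),\qquad A_n(\lb):=\frac1n\sum_{k=0}^{n-1}(\lb\lb_f)^k.
\]
The scalar $A_n(\lb)$ is identically $1$ whenever $\lb\lb_f=1$, and otherwise satisfies $|A_n(\lb)|\le\frac2{n|1-\lb\lb_f|}\to 0$. Thus, for each fixed $\lb\in\mbb C_1$, $A_n(\lb)\to\mathbf 1_{\{\lb\lb_f=1\}}$ as $n\to\ii$. Since $f\in\mc L^2$, Chebyshev's inequality yields $\mu\{|f|>N\}\le\|f\|_2^2/N^2$; given $\ep>0$, choose $N$ so that this bound is at most $\ep$, and set $\Om^\pr=\{|f|\le N\}$. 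Then $\mu(\Om\sm\Om^\pr)\le\ep$, and for every $\lb\in\mbb C_1$,
\[
\big\|\big(M_n(T,\lb)(f)-\mathbf 1_{\{\lb\lb_f=1\}}f\big)\chi_{\Om^\pr}\big\|_\ii\le N\,\big|A_n(\lb)-\mathbf 1_{\{\lb\lb_f=1\}}\big|\la 0,
\]
which proves $f\in a.u.\,WW(\Om,T)$.

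The class $a.u.\,WW(\Om,T)$ is closed under finite linear combinations: given $f_1,\dots,f_m$ in it and $\ep>0$, pick sets $\Om_j^\pr$ corresponding to $f_j$ and $\ep/m$, and $\bigcap_j\Om_j^\pr$ then works simultaneously for every $\lb\in\mbb C_1$ and every linear combination of the $f_j$. Hence the linear span of $K$ lies in $\mc L^2\cap a.u.\,WW(\Om,T)$, which by Corollary \ref{c11} is $\|\cdot\|_2$-closed in $\mc L^2$; therefore $\mc K\su a.u.\,WW(\Om,T)$.

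All the real content sits in the eigenfunction step, where the Wiener-Wintner average factors as a fixed $\mc L^2$ function times the scalar Ces\`aro mean of the unimodular sequence $\{(\lb\lb_f)^k\}$. No serious obstacle is anticipated: the $\mc L^2$-truncation via Chebyshev controls the sup-norm uniformly in $n$ and $\lb$, and the passage to the $\|\cdot\|_2$-closure is purely structural via Corollary \ref{c11}. Note that the pointwise convergence $A_n(\lb)\to\mathbf 1_{\{\lb\lb_f=1\}}$ is visibly non-uniform in $\lb$ near $\bar\lb_f$, which is harmless because the definition of $a.u.\,WW$ only demands uniform convergence in $\om$ for each fixed $\lb$.
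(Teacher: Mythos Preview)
Your proof is correct and follows essentially the same route as the paper's: reduce to eigenfunctions via Corollary~\ref{c11}, factor $M_n(T,\lb)(f)=f\cdot\frac1n\sum_{k=0}^{n-1}(\lb\lb_f)^k$, truncate $f$ to get into $\mc L^\ii$ on a set of small complement, and use the convergence of the scalar averages. Your version is slightly more explicit (Chebyshev for the truncation, the intersection argument for linearity), but there is no substantive difference.
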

\begin{proof}
By Corollary \ref{c11}, it is sufficient to show that $\sum\limits_{j=1}^mz_jf_j\in a.u.\,WW$ whenever $z_j\in\mbb C$ and $f_j\in K$ for all $1\leq j\leq m$. This, in turn, will easily follow from $K\su a.u.\,WW$.

So, pick $f\in K$ and $\ep>0$. Then there exists $\Om^\pr=\Om_{f,\ep}$ such that $\mu(\Om\sm\Om^\pr)\leq\ep$ and
$f\chi_{\Om^\pr}\in\mc L^\ii$. In addition, given $\lb\in\mbb C_1$, we have
\[
M_n(T,\lb)(f)\chi_{\Om^\pr}=\frac1n\sum_{k=0}^{n-1}(\lb\lb_f)^k\chi_{\Om^\pr}.
\]
Therefore, since the sequence $\left\{\frac1n\sum_{k=0}^{n-1}(\lb\lb_f)^k\right\}$ converges in $\mbb C$, we conclude that the averages $M_n(T,\lb)(f)\chi_{\Om^\pr}$ converge uniformly, hence $f\in a.u.\,WW$.
\end{proof}

\section{The case of finite measure}

Let $(\Om,\mu)$ be a finite measure space, and let $T$ be an m.p.t. If $Uf=f\circ T$, $f\in\mc L^0$, then $U: \mc L^2\to\mc L^2$ is a surjective linear isometry with $U^*=U^{-1}$. Given $f,g\in\mc L^2$, denote $(f,g)=\int_\Om\ol fgd\mu$, an inner product in the Hilbert space $\mc L^2$.

If $f\in\mc L^2$ and $l\in\mbb Z$, define
\[
\gm_f(l)=
\begin{cases}
\ol{(f,U^{-l}f)}&\text{if \ $ l<0$} \\
(f,U^lf)&\text{if \ $l \ge 0$}.
\end{cases}
\]
It is easily verified that the sequence $\{\gm(l)\}_{-\ii}^\ii$ is positive definite, that is, for any $z_0,\dots,z_m\in\mbb C$,
\[
\sum_{i,j=0}^m\gm(i-j)z_i\ol z_j\ge0.
\]
Therefore, Herglotz-Bochner theorem implies that there exists a positive Borel measure $\sg_f$ on $\mbb C_1$ such that
\[
\int_\Om\ol f\cdot(f\circ T^l)d\mu=(f,U^lf)=\gm_f(l)=\widehat\sg_f(l)=\int_{\mbb C_1}e^{2\pi il\lb}d\sg_f(\lb), \ \ l=1,2,\dots
\]

Let now $\mc K^\perp$ be the orthogonal compliment of $\mc K$ in the Hilbert space $\mc L^2$. It is known that if $f\in\mc K^\perp$, then the measure $\sg_f$ is continuous; see, for example, \cite[p.\,27]{as}. Let us provide an independent proof of this claim. We will need the following.

\begin{pro}\label{p21}
$U(\mc K^\perp)\su\mc K^\perp$.
\end{pro}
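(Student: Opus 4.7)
The plan is to exploit the fact, stated in the preceding paragraph, that $U$ is a surjective linear isometry of $\mc L^2$ with $U^*=U^{-1}$, so it is unitary. For a unitary operator on a Hilbert space, the orthogonal complement of an invariant subspace is again invariant. Hence it suffices to establish $U^{-1}(\mc K)\su\mc K$, which by the usual adjoint-relation $(Ug,h)=(g,U^{-1}h)$ will immediately yield $(Ug,h)=0$ for all $g\in\mc K^\perp$ and $h\in\mc K$.

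First I would show $U^{-1}(K)\su K$ on the generating set. Indeed, if $f\in K$, then by definition $Uf=f\circ T=\lb_f f$ for some $\lb_f\in\mbb C_1$. Applying $U^{-1}$ to both sides gives $f=\lb_f U^{-1}f$, whence $U^{-1}f=\ol{\lb_f}f$. Since $|\ol{\lb_f}|=1$, the function $U^{-1}f$ is again an eigenfunction of $U$ with eigenvalue on $\mbb C_1$, so $U^{-1}f\in K$.

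Next, since $U^{-1}$ is linear, it maps the linear span of $K$ into itself, and since $U^{-1}$ is an $\mc L^2$-isometry (hence continuous), it maps the $\|\cdot\|_2$-closure of that linear span — which is $\mc K$ — into $\mc K$. Thus $U^{-1}(\mc K)\su\mc K$.

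Finally, for $g\in\mc K^\perp$ and any $h\in\mc K$, the previous step gives $U^{-1}h\in\mc K$, so
\[
(Ug,h)=(g,U^*h)=(g,U^{-1}h)=0,
\]
which means $Ug\in\mc K^\perp$, as desired. There is no real obstacle here; the only point worth stating carefully is that the passage from invariance of $K$ to invariance of its closed linear span $\mc K$ uses precisely the linearity and continuity of $U^{-1}$, both of which are available because $U$ is a unitary operator on $\mc L^2$.
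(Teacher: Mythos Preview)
Your proof is correct and follows essentially the same approach as the paper: both use $U^*=U^{-1}$ to obtain $U^{-1}f=\lb_f^{-1}f$ for $f\in K$ and then compute $(Ug,h)=(g,U^{-1}h)=0$ for $g\in\mc K^\perp$. The only difference is expository---you make explicit the passage from $K$ to its closed linear span $\mc K$ via linearity and continuity of $U^{-1}$, whereas the paper tests $(Ug,f)$ only against $f\in K$ and tacitly uses $K^\perp=\mc K^\perp$.
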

\begin{proof}
Since $U^*=U^{-1}$, it follows that $U^*f = \lb_f^{-1}f$ for all $0\neq f\in K$. Thus, given $g\in\mc K^\perp$ and $f\in K $, we have
\[
(Ug,f)=(g,U^*f)=\lb_f^{-1}(g,f)=0,
\]
hence  $Ug\in\mc K^\perp$.
\end{proof}

\begin{pro}\label{p22}
If $f\in\mc K^\perp$, then $\sg_f$ is a continuous measure, that is, $\sg_f\{\lb\}=0$ for every $\lb\in\mbb C_1$.
\end{pro}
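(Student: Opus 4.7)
The plan is to recover $\sg_f\{\lb_0\}$, for an arbitrary $\lb_0\in\mbb C_1$, as a Cesàro limit of ``twisted'' Fourier coefficients of $\sg_f$, and then evaluate that limit using von Neumann's mean ergodic theorem applied to the unitary operator $\lb_0^{-1}U$, in combination with the orthogonality hypothesis $f\in\mc K^\perp$ and the invariance statement of Proposition \ref{p21}.

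First, fix $\lb_0\in\mbb C_1$ and use the Herglotz-Bochner representation $\gm_f(k)=\int_{\mbb C_1}\lb^k\,d\sg_f(\lb)$ to rewrite
\[
\frac1n\sum_{k=0}^{n-1}\lb_0^{-k}\gm_f(k)=\int_{\mbb C_1}\frac1n\sum_{k=0}^{n-1}(\lb_0^{-1}\lb)^k\,d\sg_f(\lb).
\]
The inner Cesàro sums are uniformly bounded by $1$ and converge pointwise on $\mbb C_1$ to $\chi_{\{\lb_0\}}(\lb)$ (being identically $1$ when $\lb=\lb_0$ and decaying like $O(1/n)$ otherwise, by summing a nontrivial geometric progression). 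Dominated convergence then yields
\[
\lim_{n\to\ii}\frac1n\sum_{k=0}^{n-1}\lb_0^{-k}\gm_f(k)=\sg_f\{\lb_0\}.
\]

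Next, since $\gm_f(k)=(f,U^kf)$ for $k\geq 0$, the same average equals $\left(f,\,A_n f\right)$, where $A_n=\frac1n\sum_{k=0}^{n-1}(\lb_0^{-1}U)^k$. The operator $\lb_0^{-1}U$ is unitary on $\mc L^2$, and by Proposition \ref{p21} it leaves $\mc K^\perp$ invariant; hence von Neumann's mean ergodic theorem, applied on the Hilbert space $\mc K^\perp$, implies that $A_n f\to Pf$ in $\mc L^2$, with $P$ the orthogonal projection of $\mc K^\perp$ onto its subspace of $\lb_0^{-1}U$-fixed vectors, that is, onto $\{g\in\mc K^\perp:\, Ug=\lb_0 g\}$. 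Any such $g$ also lies in $K\su\mc K$, so $g\in\mc K\cap\mc K^\perp=\{0\}$; therefore $Pf=0$, and by continuity of the inner product $(f,A_n f)\to 0$. Combined with the previous paragraph this gives $\sg_f\{\lb_0\}=0$ for every $\lb_0\in\mbb C_1$, as required.

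No step looks genuinely delicate: the mean ergodic theorem and Herglotz-Bochner are standard, and Proposition \ref{p21} is already in hand. The only place where care is needed is in the passage from $\hat\sg_f(k)$ to $\sg_f\{\lb_0\}$ via Cesàro means; one must be careful that the integrand in the displayed identity is bounded uniformly in $n$ (it is, by $1$) so that dominated convergence legitimately extracts the point mass.
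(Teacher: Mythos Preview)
Your proof is correct and follows essentially the same route as the paper's: recover $\sg_f\{\lb_0\}$ as a Ces\`aro limit of twisted Fourier coefficients, then apply the mean ergodic theorem to the twisted unitary $\lb_0^{-1}U$ and use Proposition~\ref{p21} to see that the limit lies in both $\mc K$ and $\mc K^\perp$, hence vanishes. The only cosmetic differences are that you derive the Ces\`aro-to-point-mass identity directly via dominated convergence (the paper cites it from Katznelson) and you restrict the ergodic theorem to $\mc K^\perp$ from the outset rather than working on all of $\mc L^2$ and then arguing that the limit lands in $\mc K^\perp$.
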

\begin{proof}
It is known \cite[p.\,42]{ka} that
\[
\sg_f\{\lb\}=\lim_{n\to\ii}\frac1n\sum_{l=1}^ne^{2\pi il\lb}\widehat\sg_f(\lb).
\]
Therefore, we have
\[
\begin{split}
\sg_f\{\lb\}&=\lim_{n\to\ii}\frac1n\sum_{l=1}^ne^{2\pi il\lb}\int_\Om\ol f\cdot(f\circ T^l)d\mu\\
&=\lim_{n\to\ii}\int_\Om\ol f\cdot\frac1n\sum_{l=1}^ne^{2\pi il\lb}f\circ T^ld\mu,
\end{split}
\]
thus, it would be sufficient to verify that
\begin{equation}\label{e21}
\lim_{n\to\ii}\left\|\frac1n\sum_{l=1}^ne^{2\pi il\lb}f\circ T^l\right\|_2=0.
\end{equation}
Mean Ergodic theorem for $\widetilde U: \mc L^2\to\mc L^2$ given by $\widetilde Uf=e^{2\pi i\lb}Uf$ implies that there is
$\widehat f\in\mc L^2$ such that
\[
\lim_{n\to\ii}\left\|\frac1n\sum_{l=1}^ne^{2\pi il\lb}f\circ T^l-\widehat f\,\right\|_2=0
\]
By Proposition \ref{p21}, $f\circ T^l\in\mc K^\perp$ for each $l$, so $\widehat f\in\mc K^\perp$. Also,
\[
\widehat f\circ T=\|\cdot\|_2-\lim_{n\to\ii}\frac1n\sum_{l=1}^ne^{2\pi il\lb}f\circ T^{l+1}=e^{-2\pi i\lb}\widehat f,
\]
so that $\widehat f\in\mc K$. Therefore $\widehat f=0$, and (\ref{e21}) follows.
\end{proof}

Now we shall turn our attention to a case of Van der Corput's Fundamental Inequality. Let $n\ge 1$ and $0\leq m\leq n-1$ be integers, and let $f_0,\dots,f_{n-1+m}\in\mc L^0$ be such that $f_n=\dots=f_{n-1+m}=0$. Then, replacing in the inequality in \cite[Ch.1, Lemma 3.1]{kn} $N$ by $n+1$ and $H$ by $m+1$, we obtain
\[
\begin{split}
\left|\frac1n\sum_{k=0}^{n-1}f_k\right|^2&\leq\frac{n+m-1}{n(m+1)}\frac1n\sum_{k=0}^{n-1}|f_k|^2\\
&+\frac{2(n+m-1)}{n(m+1)}\sum_{l=1}^m\frac{m+1-l}{m+1}\operatorname{Re}\frac1n\sum_{k=0}^{n-1}\ol f_kf_{k+l}.
\end{split}
\]
If $f_0,\dots,f_{n-1}\in\mc L^\ii$, then the above inequality entails
\[
\begin{split}
\left\|\frac1n\sum_{k=0}^{n-1}f_k\right\|^2_\ii&
\leq\frac{n+m-1}{n(m+1)}\left\|\frac1n\sum_{k=0}^{n-1}|f_k|^2\right\|_\ii\\&+
\frac{2(n+m-1)}{n(m+1)}\sum_{l=1}^m\frac{m+1-l}{m+1}\left\|\frac1n\sum_{k=0}^{n-1}\ol f_kf_{k+l}\right\|_\ii,
\end{split}
\]
which, in turn implies that
\begin{equation}\label{e22}
\left\|\frac1n\sum_{k=0}^{n-1}f_k\right\|^2_\ii<\frac2{m+1}
\left\|\frac1n\sum_{k=0}^{n-1}|f_k|^2\right\|_\ii+\frac4{m+1}\sum_{l=1}^m\left\|\frac1n\sum_{k=0}^{n-1}\ol f_kf_{k+l}\right\|_\ii.
\end{equation}

Here is the extension of Wiener-Wintner theorem to the case of a.u. convergence and an ergodic measure preserving transformation:
\begin{teo}\label{t21}
If a m.p.t. $T$ is ergodic, then $\mc L^1\su a.u.\,WW(\Om,T)$.
\end{teo}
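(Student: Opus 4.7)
The strategy is a double reduction followed by a spectral argument on $\mc K^\perp$. First, since $\mu(\Om)<\ii$, the space $\mc L^2$ is dense in $\mc L^1$, and Corollary \ref{c11} reduces the problem to showing $\mc L^2\su a.u.\,WW(\Om,T)$. Next, $a.u.\,WW$ is closed under addition (intersect the two choices of $\Om'$), and by Proposition \ref{p12} it contains $\mc K$; together with the orthogonal decomposition $\mc L^2=\mc K\oplus\mc K^\perp$ this further reduces the task to proving $\mc K^\perp\su a.u.\,WW(\Om,T)$.

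Fix $f\in\mc K^\perp$ and $\ep>0$. Here ergodicity of $T$ enters: Birkhoff's theorem applied to the $\mc L^1$ functions $|f|^2$ and $\ol f\cdot(f\circ T^l)$ produces the \emph{constant} a.e.\ limits $\|f\|_2^2/\mu(\Om)$ and $\gm_f(l)/\mu(\Om)$, respectively. Egorov's theorem applied for each $l$ with tolerance $\ep/2^{l+1}$, and the resulting sets intersected, yields a single set $\Om_0$ with $\mu(\Om\sm\Om_0)\le\ep$ on which every one of these convergences is uniform. I would then apply the pointwise Van der Corput inequality underlying (\ref{e22}) -- valid at each $\om$ for arbitrary complex sequences, with no boundedness hypothesis on $f$ -- with $f_k=\lb^k(f\circ T^k)$, take the supremum over $\Om_0$, and let $n\to\ii$. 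The mismatch between the Van der Corput sum $\frac{1}{n}\sum_{k=0}^{n-1-l}\ol f\circ T^k\cdot f\circ T^{k+l}$ and the ergodic average $M_n(T)(\ol f\cdot f\circ T^l)$ is only the factor $(n-l)/n\to1$, so the limit is governed by the constants above, yielding a bound independent of $\lb\in\mbb C_1$:
\[
\limsup_{n\to\ii}\|M_n(T,\lb)(f)\chi_{\Om_0}\|_\ii^2\le\frac{1}{\mu(\Om)}\left(\frac{2\|f\|_2^2}{m+1}+\frac{4}{m+1}\sum_{l=1}^m|\gm_f(l)|\right).
\]

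The closing step sends $m\to\ii$. By Proposition \ref{p22}, $\sg_f$ is continuous, so Wiener's theorem gives $\frac{1}{m}\sum_{l=1}^m|\gm_f(l)|^2\to0$, which the Cauchy--Schwarz inequality upgrades to $\frac{1}{m}\sum_{l=1}^m|\gm_f(l)|\to 0$. The right-hand side above then vanishes, showing that $M_n(T,\lb)(f)\chi_{\Om_0}\to0$ in $\mc L^\ii$ for every $\lb\in\mbb C_1$ with a common $\Om_0$; hence $f\in a.u.\,WW(\Om,T)$. The point I expect to require the most care is producing a single $\Om_0$ that serves for \emph{every} $l$ simultaneously, so that the $m$-term sum on the right-hand side is indeed controlled by the uniform limits; the countable Egorov argument with geometrically decaying tolerances handles this cleanly.
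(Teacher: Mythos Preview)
Your proposal is correct and follows essentially the same route as the paper: reduce to $\mc L^2$ via Corollary~\ref{c11}, split $\mc L^2=\mc K\oplus\mc K^\perp$ and invoke Proposition~\ref{p12}, then for $f\in\mc K^\perp$ combine Birkhoff (with ergodicity), a countable Egorov argument, Van der Corput, and Wiener's criterion applied through Proposition~\ref{p22}. Your remark that the pointwise Van der Corput inequality suffices (so no $\mc L^\ii$ hypothesis on $f$ is needed before taking suprema over $\Om_0$) and your explicit $1/\mu(\Om)$ normalization are, if anything, slightly more careful than the paper's presentation.
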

\begin{proof}
By Corollary \ref{c11}, as $\mc L^2$ is dense in $\mc L^1$,
$\mc L^2=\mc K\oplus\mc K^\perp$, and, by Proposition \ref{p12}, $\mc K\su a.u.\,WW$, it remains to show that $\mc K^\perp\su a.u.\,WW$.

Let $f\in\mc K^\perp$, and $\ep>0$. By the pointwise ergodic theorem, since $T$ is ergodic, we have
\[
M_n(T)(|f|^2)\to\|f\|^2_2\text{ \ a.e.\ \ and \ }M_n(T)(\ol f\cdot(f\circ T^l))\to\widehat\sg_f(l)\text{ \ a.e.\ \, }\forall\, \ l=1,2,\dots
\]
Applying Egorov's theorem repeatedly, we can construct $\Om^\pr=\Om_{f,\ep}\su\Om$ such that $\mu(\Om\sm\Om^\pr)\leq\ep$ and
\begin{equation}\label{e23}
\left\|M_n(T)(|f|^2)\chi_{\Om^\pr}\right\|_\ii\to\|f\|^2_2\text{ \ \ and \ }\left\|M_n(T)(\ol f\cdot(f\circ T^l))\chi_{\Om^\pr}\right\|_\ii\to\widehat\sg_f(l)
\end{equation}
for all $l=1,2,\dots$

If $\lb\in\mbb C_1$ and $f_k=\lb^kf\circ T^k\chi_{\Om^\pr}$, then a simple calculation yields
\[
\ol f_kf_{k+l}=\lb^l(\ol f\cdot(f\circ T^l))\circ T^k\chi_{\Om^\pr},\text{\ hence\ \ }|f_k|^2=|f|^2\circ T^k\chi_{\Om^\pr}, \ \forall \ k,l=0,1,2,\dots
\]
Therefore, inequality (\ref{e22}) implies that
\[
\begin{split}
\sup_{\lb\in\mbb C_1}\left\|M_n(T,\lb)(f)\chi_{\Om^\pr}\right\|_\ii^2&<\frac2{m+1}\left\|M_n(T)(|f|^2)\chi_{\Om^\pr}\right\|_\ii\\
&+\frac4{m+1}\sum_{l=1}^m\left\|M_n(T)(\ol f\cdot(f\circ T^l))\chi_{\Om^\pr}\right\|_\ii.
\end{split}
\]
Thus, for a fixed $m$, in view of (\ref{e23}), we obtain
\[
\limsup_n\sup_{\lb\in\mbb C_1}\left\|M_n(T,\lb)(f)\chi_{\Om^\pr}\right\|_\ii^2\leq\frac2{m+1}\|f\|_2^2+\frac4{m+1}\sum_{l=1}^m|\widehat\sg_f(l)|.
\]
Since, by Proposition \ref{p22}, the measure $\sg_f$ is continuous, Wiener's criterion of continuity of positive finite Borel measure \cite[p.\,42]{ka} yields
\[
\lim_{m\to\ii}\frac1{m+1}\sum_{l=1}^m|\widehat\sg_f(l)|^2=0,\text{ \ hence \ }\lim_{m\to\ii}\frac1{m+1}\sum_{l=1}^m|\widehat\sg_f(l)|=0,
\]
and we conclude that
\[
\limsup_n\sup_{\lb\in\mbb C_1}\left\|M_n(T,\lb)(f)\chi_{\Om^\pr}\right\|_\ii=0,
\]
so $f\in a.u.\,WW(\Om,T)$.
\end{proof}

\begin{rem}
In the classical case, a simple application of the ergodic decomposition theorem yields convergence of the averages $M_n(T,\lb)(f)$, with $f\in\mc L^1$, on a set of full measure for all $\lb\in\mbb C_1$ without assumption of ergodicity of the m.p.t. $T$; see, for example, \cite[Theorem 2.12]{as}. Unfortunately, this does not seem to be the case with a.u. convergence. So, the question whether Theorem \ref{t21} remains valid for a non-ergodic m.p.t. $T$ remains open.
\end{rem}

\section{The case of infinite measure}

Assume now that $(\Om,\mu)$ is $\sg$-finite, while $T:\Om\to\Om$ is an ergodic m.p.t. In the next theorem, we employ the idea of the proof of \cite[Theorem 5.1]{as}.
\begin{teo}\label{t31}
$\mc L^1(\Om)\su a.u.\,WW(\Om,T)$.
\end{teo}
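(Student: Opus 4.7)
My plan is to split via the Hopf decomposition and reduce either to Theorem \ref{t21} or to a simple domination argument. Writing $\Om = C \sqcup D$ with $C$ conservative and $D$ dissipative, both $T$-invariant, ergodicity of $T$ forces $\mu(C) = 0$ or $\mu(D) = 0$, so $T$ is either conservative or totally dissipative on $\Om$. If $\mu(\Om) < \ii$, the statement is Theorem \ref{t21}, so I may assume $\mu(\Om) = \ii$.

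In the totally dissipative case, a standard wandering-set argument combined with ergodicity forces $(\Om, \mu)$ to consist of countably many atoms of a common measure $c > 0$ permuted cyclically by $T$; for $f \in \mc L^1$ this gives the uniform bound
\[
\sup_{x \in \Om,\, \lb \in \mbb C_1} |M_n(T, \lb)(f)(x)| \leq \frac{\|f\|_1}{cn} \to 0,
\]
so $M_n(T, \lb)(f)$ converges uniformly on all of $\Om$, and $f \in a.u.\,WW$ trivially.

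The main case is $T$ conservative ergodic with $\mu(\Om) = \ii$. Here I reduce the whole statement to the single claim that for every $h \in \mc L^1(\Om)$, the sequence $M_n(T)(|h|)$ converges to $0$ almost uniformly on $\Om$. Granting this, the pointwise domination $|M_n(T, \lb)(f)| \leq M_n(T)(|f|)$ forces any set $\Om^\pr$ witnessing a.u.\ convergence of $M_n(T)(|f|) \to 0$ to satisfy $\|M_n(T, \lb)(f) \chi_{\Om^\pr}\|_\ii \to 0$ for every $\lb \in \mbb C_1$, proving $f \in a.u.\,WW$.

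For the claim, a.e.\ convergence $M_n(T)(|h|) \to 0$ comes from Hopf's ratio ergodic theorem with test function $\chi_A$ for some $A$ with $0 < \mu(A) < \ii$: Kac's lemma gives $\int_A n_A\, d\mu = \mu(\Om) = \ii$, so $\tau_j/j \to \ii$ a.e.\ for the induced transformation $T_A$, hence $M_n(T)(\chi_A) \to 0$ a.e., and the ratio theorem transfers this to arbitrary $|h| \in \mc L^1$. The maximal ergodic inequality quoted in the proof of Proposition \ref{p11} gives $\mu\{\sup_n M_n(T)(|h|) > \dt\} \leq 2\|h\|_1/\dt < \ii$, so the decreasing sets $E_N^\dt = \{\sup_{n \geq N} M_n(T)(|h|) > \dt\}$ have finite measure and null intersection, forcing $\mu(E_N^\dt) \to 0$. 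A diagonal choice $\mu(E_{N_k}^{1/k}) \leq \ep/2^{k+1}$ produces $\Om^\pr = \bigcap_k (\Om \sm E_{N_k}^{1/k})$ with $\mu(\Om \sm \Om^\pr) \leq \ep$ and $\|M_n(T)(|h|) \chi_{\Om^\pr}\|_\ii \leq 1/k$ for all $n \geq N_k$. The main obstacle is precisely this passage from a.e.\ to a.u.\ convergence in infinite measure, which is made possible by the finiteness of $\mu\{h^* > \dt\}$ provided by the maximal inequality.
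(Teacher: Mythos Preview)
Your argument is correct and, in the infinite-measure case, more streamlined than the paper's. The paper does not use ergodicity to collapse the Hopf decomposition: it keeps all three pieces $D$, $C\setminus\widetilde C$, and $\widetilde C$ (where $\widetilde C=\{p>0\}$ for a $T$-invariant $p\in\mc L^1_+$), invokes \cite[Theorem~3.1]{cl} for a.u.\ convergence of $M_n(T)(|f|)$ on the first two pieces and then dominates, and on $\widetilde C$ passes to the finite invariant measure $p\,d\mu$ to apply Theorem~\ref{t21}. Under the standing ergodicity hypothesis with $\mu(\Om)=\infty$, any $T$-invariant $p\in\mc L^1_+$ is constant and hence zero, so $\widetilde C$ is null and the paper's proof collapses precisely to your domination argument; conversely, the paper's layout is written so that only the $\widetilde C$-step actually uses ergodicity, which isolates where the obstruction to a non-ergodic version sits (cf.\ the remark after Theorem~\ref{t21}). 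Your self-contained upgrade from a.e.\ to a.u.\ convergence of $M_n(T)(|h|)\to 0$ via the maximal inequality is essentially the content of the cited \cite[Theorem~3.1]{cl}, so nothing is lost by spelling it out.

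One small slip in the totally dissipative case: the atoms are shifted by a $\mbb Z$-translation, not ``permuted cyclically'' (a cycle would force finite measure), and the structure claim needs some care if $T$ is not assumed invertible. A cleaner and more robust route, which the paper takes, is simply to use that $\sum_{k\ge 0}|f|\circ T^k<\infty$ a.e.\ on $D$ (see \cite[\S3.1, Theorem~1.6]{kr}), hence $M_n(T)(|f|)\to 0$ a.e.\ on $D$, after which your same a.u.\ upgrade applies verbatim; this avoids any structural classification of the dissipative part.
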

\begin{proof}
Fix $f\in\mc L^1$ and $\ep>0$. Let $\Om=C\cup D$ be the Hopf decomposition, where $C$ is the conservative and $D=\Om\sm C$ the dissipative part of $\Om$. Then, by \cite[\S\,3.1, Theorem 1.6]{kr}, or \cite[\S\,3.7, Theorem 7.4]{pe}, we have
\[
nM_n(T)(|f|)(\om)=\sum_{k=0}^{n-1}|f|(T^k\om)<\ii
\]
for almost all $\om\in D$. Besides, since, by \cite[Theorem 3.1]{cl}, the sequence $\{M_n(T)(|f|)\}$ converges a.u., there is $\Om_1\su D$ such that
\[
\mu(D\sm \Om_1)\leq\frac\ep3\text{ \ \ and \ \ } \{M_n(T)(|f|)\chi_{\Om_1}\}\text{\ \ converges uniformly}.
\]
Then, as $M_n(T)(|f|)\to 0$ a.e. on $D$, it follows that $M_n(T)(|f|)\chi_{\Om_1}\to 0$ uniformly. Therefore, in view of
\[
|M_n(T,\lb)(f)\chi_{\Om_1}|\leq M_n(T)(|f|)\chi_{\Om_1} \ \ \forall \ \lb\in\mbb C_1,
\]
we conclude that
\begin{equation}\label{e31}
M_n(T,\lb)(f)\chi_{\Om_1}\to 0\text{ \ \ uniformly} \ \ \forall \ \lb\in\mbb C_1.
\end{equation}

Next, since $ (\Om,\mu)$ is $\sg$-finite, applying an exhaustion argument, one can construct $p\in\mc L^1_+$ such that
$p\circ T=p$ and $\wt C=\{p>0\}$ is the maximal modulo $\mu$ subset of $C$ on which there exists a finite $T$-invariant measure; see \cite[pp.131,\,132]{kr}. Besides, by \cite[Lemma 3.11, Theorem 3.12]{kr}, $\wt C$ and $C\sm\wt C$ are $U$-absorbing (equivalently, $T$-absorbing) and $M_n(T)(|f|)\to 0$ a.e. on $C\sm \widetilde C$. Hence, as above, there exists a set $\Om_2\su C\sm\wt C$ such that
\[
\mu((C\sm\wt C)\sm \Om_2)\leq\frac\ep3\text{ \ \ and \ \ } M_n(T)(|f|)\chi_{\Om_2}\to 0\text{ \ \ uniformly},
\]
implying that
\begin{equation}\label{e32}
M_n(T,\lb)(f)\chi_{\Om_2}\to 0\text{ \ \ uniformly} \ \  \forall \ \lb\in\mbb C_1.
\end{equation}

If we define $\mu^\pr=p\cdot\mu\sim \mu$, then $\mu^\pr$ is a $U$-invariant (equivalently, $T$-invariant, that is, $p\circ T=p$), finite measure on $\wt C$.
It follows that $T$ is a m.p.t. on the finite measure space $(\widetilde C,\mu^\pr)$:
\[
\mu^\pr(T^{-1}A)=\int_{T^{-1}A}p\,d\mu=\int_Ap\circ T\,d\mu=\int_Ap\,d\mu=\mu^\pr(A).
\]
In addition, as $\wt C$ is $T$-absorbing, ergodicity of $T$ and $\mu^\pr\sim\mu$ entails that $T:\wt C\to\wt C$ is an ergodic m.p.t. Also, since $f\in\mc L^1(\wt C, \mu)$, we have  $fp^{-1}\in\mc L^1(\wt C,\mu^\pr)$. Therefore, by Theorem \ref{t21}, there exists $\Om_3\su\wt C$ such that
\[
\mu(\wt C\sm \Om_3)=\mu^\pr(\wt C\sm\Om_3)\leq\frac\ep3
\]
and the averages
\[
M_n(T,\lb)(fp^{-1})\chi_{\Om_3}=\frac1n\sum_{k=0}^{n-1}\lb^k(fp^{-1})\circ T^k\chi_{\Om_3}
\]
converge uniformly for all $\lb\in\mbb C_1$.
But $(fp^{-1})\circ T^k\chi_{\Om_3}=p^{-1}(f\circ T^k)\chi_{\Om_3}$, and we conclude that the sequence
\begin{equation}\label{e33}
\big\{M_n(T,\lb)(f)\chi_{\Om_3}=pM_n(T,\lb)(fp^{-1})\chi_{\Om_3}\big\}\text{\ \ converges uniformly} \ \ \forall \ \lb\in\mbb C_1.
\end{equation}

Now, with $\Om^\pr=\Om_1\cup \Om_2\cup \Om_3$, in view of (\ref{e31})\,-\,(\ref{e33}), we obtain
\[
\mu(\Om\sm\Om^\pr)\leq\ep\text{ \ \ and \ \ } \{M_n(T,\lb)(f)\chi_{\Om^\pr}\}\text{\ \ converges uniformly} \ \ \forall \ \lb\in\mbb C_1.
\]
Therefore $f\in a.u.\,WW(\Om,T)$, and the proof is complete.
\end{proof}

Denote
\[
\mc R_\mu=\left\{f\in\mc L^1+\mc L^\ii : \ \mu\{|f|>\lb\}<\ii\text{ \ for all \ }\lb>0\right\}.
\]

\begin{teo}\label{t32}
$\mc R_\mu\su a.u.\,WW(\Om,T)$.
\end{teo}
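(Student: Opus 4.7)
My plan is to extend Theorem~\ref{t31} from $\mc L^1$ to $\mc R_\mu$ by a single cut-off decomposition that separates an $\mc L^1$ piece (handled by Theorem~\ref{t31}) from a uniformly small $\mc L^\ii$ remainder. For each $f\in\mc R_\mu$ and each $\lb>0$, I would set
\[
g_\lb=f\chi_{\{|f|>\lb\}},\qquad h_\lb=f\chi_{\{|f|\leq\lb\}},
\]
so that trivially $\|h_\lb\|_\ii\leq\lb$. Writing $f=f_1+f_\ii$ with $f_1\in\mc L^1$ and $\|f_\ii\|_\ii=C$, the set $E_\lb=\{|f|>\lb\}$ has finite measure by the very definition of $\mc R_\mu$, so
\[
\int|g_\lb|\,d\mu\leq\int|f_1|\,d\mu+C\mu(E_\lb)<\ii,
\]
and $g_\lb\in\mc L^1$ for \emph{every} $\lb>0$. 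This is the only place where both ingredients in the definition of $\mc R_\mu$ (membership in $\mc L^1+\mc L^\ii$ \emph{and} finiteness of the distribution at every positive level) are used.

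Given $\ep>0$, I would then take $\lb=1/k$ for each $k\in\mbb N$ and obtain $f=g_k+h_k$ with $g_k\in\mc L^1$ and $\|h_k\|_\ii\leq 1/k$. By Theorem~\ref{t31} applied to $g_k$, there is $\Om_k\su\Om$ with $\mu(\Om\sm\Om_k)\leq\ep/2^k$ such that $\{M_n(T,\lb)(g_k)\chi_{\Om_k}\}$ converges uniformly in $n$ for every $\lb\in\mbb C_1$. I would set $\Om^\pr=\bigcap_k\Om_k$, giving $\mu(\Om\sm\Om^\pr)\leq\ep$ and, since $\Om^\pr\su\Om_k$ for every $k$, uniform convergence of $\{M_n(T,\lb)(g_k)\chi_{\Om^\pr}\}$ for every $k$ and every $\lb$.

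To close the argument I would show uniform Cauchyness of $\{M_n(T,\lb)(f)\chi_{\Om^\pr}\}$ for each fixed $\lb\in\mbb C_1$. For $\eta>0$, choose $k$ with $2/k<\eta/2$, and then pick $N=N(\lb,k)$ so that $\|(M_n(T,\lb)-M_m(T,\lb))(g_k)\chi_{\Om^\pr}\|_\ii<\eta/2$ for all $n,m\ge N$. Because $\|M_n(T,\lb)(h_k)\|_\ii\leq\|h_k\|_\ii\leq 1/k$ uniformly in $n$ and $\lb$, the $h_k$-contribution to the increment is bounded by $2/k<\eta/2$. Adding the two estimates gives
\[
\|(M_n(T,\lb)-M_m(T,\lb))(f)\chi_{\Om^\pr}\|_\ii<\eta \qquad\text{for all }n,m\ge N,
\]
so $f\in a.u.\,WW(\Om,T)$.

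There is no real obstacle once the decomposition is in place: the scheme is the familiar "density plus small error" pattern, but with approximation carried out in the $\mc L^\ii$-norm rather than an $\mc L^p$-norm. Consequently no maximal inequality is needed for the extension---Proposition~\ref{p11} and Corollary~\ref{c11} have already done their work inside the proof of Theorem~\ref{t31}, and what remains is bookkeeping. The one thing to keep an eye on is that the sets $\Om_k$ produced by Theorem~\ref{t31} depend on the chosen $g_k$ but not on $\lb$, which is what makes the single intersection $\Om^\pr$ work simultaneously for all $\lb\in\mbb C_1$.
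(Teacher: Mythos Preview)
Your proof is correct and follows essentially the same strategy as the paper: split $f\in\mc R_\mu$ into an $\mc L^1$ piece plus an $\mc L^\ii$ piece with arbitrarily small sup-norm, invoke Theorem~\ref{t31} on the $\mc L^1$ piece, and absorb the remainder trivially. The paper obtains the decomposition by citing \cite[Proposition~2.1]{ccl}, whereas you construct it explicitly via the cut-off $f=f\chi_{\{|f|>\lb\}}+f\chi_{\{|f|\leq\lb\}}$; your added care in taking $\Om'=\bigcap_k\Om_k$ makes transparent that a single exceptional set works for all $\lb\in\mbb C_1$, a point the paper leaves implicit by fixing $\ep$ and $\dt$ simultaneously.
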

\begin{proof}
Pick $f\in\mc R_\mu$ and fix $\ep>0$, $\dt>0$. By \cite[Proposition 2.1]{ccl}, there exist $g\in\mc L^1$ and $h\in\mc L^\ii$ such that
\[
\|h\|_\ii\leq\frac\dt3\text{ \ and \ } f=g+h.
\]
As $g\in\mc L^1$, Theorem \ref{t31} entails that there exists $\Om^\pr\su\Om$ such that for each $\lb\in\mbb C_1$  there is a number $N=N(\lb)$ satisfying conditions
\[
\mu(\Om\sm\Om^\pr)\leq\ep\text{ \ \ and \ \ }\|(M_m(T,\lb)(g)-M_n(T,\lb)(g))\chi_{\Om^\pr}\|_\ii\leq\frac\dt3\, \ \ \forall\,\ m,n\ge N.
\]
Then, given $\lb\in\mbb C_1$ and $m,n\ge N(\lb)$, we have
\[
\begin{split}
\|(M_m(T,\lb)(f)&-M_n(T,\lb)(f))\chi_{\Om^\pr}\|_\ii\leq\|(M_m(T,\lb)(g)-M_n(T,\lb)(g))\chi_{\Om^\pr}\|_\ii\\
&+\|M_m(T,\lb)(h)\|_\ii+\|M_n(T,\lb)(h)\|_\ii\leq\frac\dt3+2\|h\|_\ii\leq\dt,
\end{split}
\]
implying that $f\in a.u.\,WW$.
\end{proof}

As $\mc L^p\su\mc R_\mu$ for any $1\leq p<\ii$, Theorem \ref{t32} yields the following.
\begin{cor}\label{c31}
$\mc L^p\su a.u.\,WW(\Om,T)$ for all $1\leq p<\ii$.
\end{cor}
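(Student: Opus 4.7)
The plan is to reduce the statement directly to Theorem \ref{t32} by verifying the inclusion $\mc L^p\su\mc R_\mu$ for every $1\leq p<\ii$; once this inclusion is established, the corollary is immediate from the definition of $a.u.\,WW(\Om,T)$.

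To show $\mc L^p\su\mc R_\mu$, fix $f\in\mc L^p$. First I would verify the distributional condition: by Chebyshev's inequality,
\[
\mu\{|f|>\lb\}\leq\frac{\|f\|_p^p}{\lb^p}<\ii \quad \forall \ \lb>0.
\]
Next I would exhibit a decomposition $f=g+h$ with $g\in\mc L^1$ and $h\in\mc L^\ii$. The natural choice is the truncation at level $1$: set
\[
h=f\chi_{\{|f|\leq 1\}},\qquad g=f\chi_{\{|f|>1\}}.
\]
Then clearly $\|h\|_\ii\leq 1$, and since $|f|\leq |f|^p$ on $\{|f|>1\}$ (because $p\ge 1$),
\[
\int_\Om |g|\,d\mu=\int_{\{|f|>1\}}|f|\,d\mu\leq\int_{\{|f|>1\}}|f|^p\,d\mu\leq\|f\|_p^p<\ii,
\]
so $g\in\mc L^1$. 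This confirms $f\in\mc L^1+\mc L^\ii$ and hence $f\in\mc R_\mu$.

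With $\mc L^p\su\mc R_\mu$ in hand, Theorem \ref{t32} immediately yields $\mc L^p\su a.u.\,WW(\Om,T)$. There is no real obstacle here; the only minor point to keep an eye on is that the argument uses $p\ge 1$ in the estimate $|f|\leq |f|^p$ on $\{|f|>1\}$, which is exactly the hypothesis on $p$ in the statement.
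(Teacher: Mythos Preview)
Your proposal is correct and follows exactly the approach of the paper, which simply states that $\mc L^p\su\mc R_\mu$ for $1\leq p<\ii$ and then invokes Theorem \ref{t32}. You have just filled in the elementary verification of the inclusion $\mc L^p\su\mc R_\mu$ that the paper leaves implicit.
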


\section{A Wiener-Wintner-type ergodic theorem with Besicovitch weights}
The goal of this section is to show that Theorem \ref{t32} remains valid if one expands the set $\big\{ \{\lb^k\}: \,\lb\in\mbb C_1\big\}$ to the set of all bounded Besicovitch sequences.

A function $P : \mbb Z \to \mbb C$ is called a {\it trigonometric polynomial} if $P(k)=\sum\limits_{j=1}^s z_j\lb_j^k$, $k\in\mbb Z$, for some $s\in\mbb N$, $\{ z_j \}_1^s \su\mbb C$, and $\{ \lb_j \}_1^s\su\mbb C_1$. A sequence $\{b_k\}_{k=0}^\ii\su\Bbb C$ is called a {\it bounded Besicovitch sequence} if

(i) $|b_k | \leq C<\ii$ for all $k$ and some $C>0$;

(ii) for every $\ep >0$ there exists a trigonometric polynomial $P$ such that
\[
\limsup_n \frac 1n \sum_{k=0}^{n-1} |b_k - P(k) |<\ep .
\]

By linearity, Corollary \ref{c31} implies the following.
\begin{pro}\label{p41}
Let $f\in\mc L^1$. Then for every $\ep>0$ there exists $\Om^\pr\su\Om$ with $\mu(\Om\sm\Om^\pr)\leq\ep$ such that the sequence
\[
M_n(T,P)(f)\chi_{\Om^\pr}=\frac1n\sum_{k=0}^{n-1}P(k)f\circ T^k\chi_{\Om^\pr}, \ n=1,2,\dots
\]
converges uniformly for any trigonometric polynomial $P=P(k)$.
\end{pro}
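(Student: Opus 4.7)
The plan is to deduce this essentially verbatim from Corollary \ref{c31} at $p=1$. Given $f\in\mc L^1$ and $\ep>0$, that corollary places $f$ in $a.u.\,WW(\Om,T)$, and by definition this produces a \emph{single} set $\Om^\pr\su\Om$ with $\mu(\Om\sm\Om^\pr)\leq\ep$ on which the sequence $\{M_n(T,\lb)(f)\chi_{\Om^\pr}\}$ converges uniformly for every $\lb\in\mbb C_1$ at once. The point I want to stress in the write-up is that $\Om^\pr$ is chosen before $\lb$ and depends only on $f$ and $\ep$; that is precisely how $a.u.\,WW$ was defined, and it is what will allow a single $\Om^\pr$ to serve every trigonometric polynomial.

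For a trigonometric polynomial $P(k)=\sum_{j=1}^s z_j\lb_j^k$ with $z_j\in\mbb C$ and $\lb_j\in\mbb C_1$, linearity of the averaging operator in the weight gives
\[
M_n(T,P)(f)\chi_{\Om^\pr}=\sum_{j=1}^s z_j\,M_n(T,\lb_j)(f)\chi_{\Om^\pr}.
\]
Each of the finitely many summands on the right converges uniformly on $\Om^\pr$ by the previous paragraph, so their finite $\mbb C$-linear combination is again a uniformly Cauchy, hence uniformly convergent, sequence in $\mc L^\ii$. Since $\Om^\pr$ did not depend on $s$, $\{z_j\}$, or $\{\lb_j\}$, the statement follows for every trigonometric polynomial $P$ simultaneously, using the same set $\Om^\pr$.

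There is no genuine obstacle here: the proposition is just the observation that $a.u.\,WW$ already packages uniform-in-$\lb$ control of the averages, and finite linearity transfers this control from single exponentials to arbitrary trigonometric polynomials. The only thing to watch in the exposition is to make the uniformity-in-$\lb$ clause of the $a.u.\,WW$ definition explicit, so that the reader sees why one fixed $\Om^\pr$ suffices for the whole class of trigonometric polynomials rather than one $\Om^\pr$ per $P$.
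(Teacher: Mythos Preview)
Your proposal is correct and matches the paper's own argument exactly: the paper states only ``By linearity, Corollary~\ref{c31} implies the following,'' and your write-up simply unpacks that sentence by invoking $f\in a.u.\,WW(\Om,T)$ from Corollary~\ref{c31} to get a single $\Om^\pr$, then taking finite $\mbb C$-linear combinations of the averages $M_n(T,\lb_j)(f)\chi_{\Om^\pr}$.
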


Let us denote by $\mc B$ the set of Besicovitch sequences. The next theorem is an extension of Theorem \ref{t31}.
\begin{teo}\label{t41}
If $f\in\mc L^1$, then for any $\ep>0$ there is $\Om^\pr\su\Om$ with $\mu(\Om\sm\Om^\pr)\leq\ep$ such that the sequence $\{M_n(T,\ol b)(f)\chi_{\Om^\pr}\}$ converges uniformly for every \,$\ol b\in\mc B$.
\end{teo}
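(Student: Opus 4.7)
My plan is to combine Proposition \ref{p41} with the closedness principle of Proposition \ref{p11} through a density argument. Applying Proposition \ref{p11} with $\mc B$ taken as the set of bounded Besicovitch sequences tells us that $\mc C_1(\mc B)$ — i.e., the collection of $f\in\mc L^1$ satisfying the conclusion of Theorem \ref{t41} — is $\|\cdot\|_1$-closed. Since the truncations $f\chi_{\{|f|\leq M\}}$ converge to $f$ in $\mc L^1$ as $M\to\ii$, the subspace $\mc L^1\cap\mc L^\ii$ is dense in $\mc L^1$, and it suffices to verify $\mc L^1\cap\mc L^\ii\su\mc C_1(\mc B)$.

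To do so, I would fix a nonzero $g\in\mc L^1\cap\mc L^\ii$ and $\ep>0$. By Proposition \ref{p41} there is a single set $\Om'\su\Om$ with $\mu(\Om\sm\Om')\leq\ep$ such that $\{M_n(T,P)(g)\chi_{\Om'}\}$ converges uniformly for every trigonometric polynomial $P$. I claim the same $\Om'$ witnesses uniform convergence for every $\ol b\in\mc B$. Given $\ol b\in\mc B$ and $\dt>0$, the definition of a bounded Besicovitch sequence yields a trigonometric polynomial $P$ with
\[
\limsup_n\frac{1}{n}\sum_{k=0}^{n-1}|b_k-P(k)|<\frac{\dt}{4\|g\|_\ii},
\]
so that for all sufficiently large $n$,
\[
\bigl|M_n(T,\ol b-P)(g)(\om)\bigr|\leq\|g\|_\ii\cdot\frac{1}{n}\sum_{k=0}^{n-1}|b_k-P(k)|<\frac{\dt}{4}
\]
uniformly in $\om\in\Om$. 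Writing $M_n(T,\ol b)(g)=M_n(T,P)(g)+M_n(T,\ol b-P)(g)$ and combining the uniform Cauchyness of the first summand on $\Om'$ with the pointwise bound on the second, the triangle inequality shows that $\{M_n(T,\ol b)(g)\chi_{\Om'}\}$ is Cauchy in $\mc L^\ii$, and therefore converges uniformly on $\Om'$; thus $g\in\mc C_1(\mc B)$.

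The decisive point is that the set $\Om'$ produced by Proposition \ref{p41} is uniform over all trigonometric polynomials $P$ — otherwise the approximating $P$, whose degree and coefficients depend on $\ol b$ and $\dt$, would force us to shrink $\Om'$ for every weight, and no single small-complement set could accommodate the whole family $\mc B$. The only place the boundedness of $g$ enters is the pointwise estimate above, and Proposition \ref{p11} precisely absorbs the reduction from general $\mc L^1$ functions to bounded ones, sidestepping the more delicate maximal-inequality splitting that would otherwise be needed for unbounded $f$.
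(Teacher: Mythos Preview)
Your argument is correct and is essentially identical to the paper's own proof: both invoke Proposition~\ref{p11} to reduce to $f\in\mc L^1\cap\mc L^\ii$, then use the single set $\Om'$ from Proposition~\ref{p41} together with the Besicovitch approximation $\|M_n(T,\ol b-P)(f)\|_\ii\leq\|f\|_\ii\cdot\frac1n\sum_{k=0}^{n-1}|b_k-P(k)|$ to obtain the Cauchy estimate. Your closing remark about the uniformity of $\Om'$ over all trigonometric polynomials is exactly the point that makes the argument work.
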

\begin{proof}
In view of Proposition \ref{p11}, it is sufficient to show that the convergence holds for any $f\in\mc L^1\cap\mc L^\ii$. So, pick $0\neq f\in\mc L^1\cap\mc L^\ii$ and let $\ep>0$, $\dt>0$. By Proposition \ref{p41}, there exists $\Om^\pr\su\Om$ with $\mu(\Om\sm\Om^\pr)\leq\ep$ such that for any trigonometric polynomial $P=P(k)$ there is $N_1=N_1(P)$ satisfying
\[
\|(M_m(T,P)(f)-M_n(T,P)(f))\chi_{\Om^\pr}\|_\ii\leq\frac\dt3 \ \ \,\forall \ m,n\ge N_1.
\]
Let $\ol b=\{b_k\}\in\mc B$, and let a trigonometric polynomial $P=P(k)$ be such that
\[
\limsup_n\frac1n\sum_{k=0}^{n-1}|b_k-P(k)|<\frac\dt{3\|f\|_\ii}.
\]
Then there exists $N_2$ such that \,$\frac1n\sum_{k=0}^{n-1}|b_k-P(k)|<\frac\dt{3\|f\|_\ii}$ \,whenever $n\ge N_2$. Now, if $m,n\ge\max\{N_1,N_2\}$, it follows that
\[
\begin{split}
\|(M_m&(T,\ol b)(f)-M_n(T,\ol b)(f))\chi_{\Om^\pr}\|_\ii\leq\|M_m(T,\ol b)(f)-M_m(T,P)(f)\|_\ii\\
&+\|M_n(T,\ol b)(f)-M_n(T,P)(f)\|_\ii+\|(M_m(T,P)(f)-M_n(T,P)(f))\chi_{\Om^\pr}\|_\ii\\
&\leq2\|f\|_\ii\frac1n\sum_{k=0}^{n-1}|b_k-P(k)|+\frac\dt3<\dt,
\end{split}
\]
so, the sequence $\{M_n(T,\ol b)(f)\chi_{\Om^\pr}\}$ converges uniformly for each $\ol b\in\mc B$.
\end{proof}

As in Theorem \ref{t32}, we derive the following.
\begin{teo}\label{t42}
Theorem \ref{t41} holds for all $f\in\mc R_\mu$.
\end{teo}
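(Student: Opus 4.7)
The plan is to adapt the proof of Theorem \ref{t32} to the Besicovitch setting. The extra difficulty is that a Besicovitch sequence $\ol b=\{b_k\}$ comes with its own bound $C_{\ol b}=\sup_k|b_k|$, and since $\mc B$ is not uniformly bounded, a single decomposition $f=g+h$ with $\|h\|_\ii$ small cannot give a bound on $\|M_n(T,\ol b)(h)\|_\ii\leq C_{\ol b}\|h\|_\ii$ that is small simultaneously for all $\ol b\in\mc B$. The fix is to use a countable family of decompositions whose error shrinks to zero, and take a countable intersection of the sets produced by Theorem \ref{t41}.

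More concretely, I would fix $f\in\mc R_\mu$ and $\ep>0$, and for each $k\in\mbb N$ invoke \cite[Proposition 2.1]{ccl} to write $f=g_k+h_k$ with $g_k\in\mc L^1$ and $\|h_k\|_\ii\leq 1/k$. Applying Theorem \ref{t41} to $g_k$ with tolerance $\ep/2^k$ produces a set $\Om_k^\pr\su\Om$ with $\mu(\Om\sm\Om_k^\pr)\leq\ep/2^k$ such that $\{M_n(T,\ol b)(g_k)\chi_{\Om_k^\pr}\}$ converges uniformly for every $\ol b\in\mc B$. I would then set
\[
\Om^\pr=\bigcap_{k=1}^\ii\Om_k^\pr,
\]
so that $\mu(\Om\sm\Om^\pr)\leq\ep$, and verify that $\{M_n(T,\ol b)(f)\chi_{\Om^\pr}\}$ is uniformly Cauchy for every $\ol b\in\mc B$.

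To check the Cauchy condition, fix $\ol b\in\mc B$ with $C=\sup_k|b_k|$ and any $\dt>0$, then select $k$ with $C/k<\dt/3$. Since $\Om^\pr\su\Om_k^\pr$, there exists $N=N(\ol b,k)$ such that $\|(M_m(T,\ol b)(g_k)-M_n(T,\ol b)(g_k))\chi_{\Om^\pr}\|_\ii\leq\dt/3$ for all $m,n\ge N$. Moreover $\|M_n(T,\ol b)(h_k)\|_\ii\leq C\|h_k\|_\ii\leq C/k<\dt/3$ for every $n$, so the triangle inequality applied to $f=g_k+h_k$ yields $\|(M_m(T,\ol b)(f)-M_n(T,\ol b)(f))\chi_{\Om^\pr}\|_\ii<\dt$ for $m,n\ge N$, exactly as in Theorem \ref{t32}.

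The main obstacle is purely conceptual rather than technical: recognizing that one cannot carry over the single-$\dt$ decomposition of Theorem \ref{t32} because the weighted averages are controlled by the $\ell^\ii$-norm of $\ol b$, which is unbounded over $\mc B$. Once this is addressed by a diagonal decomposition with $\|h_k\|_\ii\to 0$ together with a Borel-Cantelli-style summation of measures $\ep/2^k$, the rest of the argument is routine.
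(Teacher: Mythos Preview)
Your argument is correct and is, in fact, the fully rigorous version of what the paper indicates only by the phrase ``As in Theorem~\ref{t32}.'' A literal transcription of that proof would hit exactly the obstacle you name: since $\sup_k|b_k|$ is not bounded uniformly over $\ol b\in\mc B$, a single decomposition $f=g+h$ with $\|h\|_\ii\le\dt/3$ cannot control $\|M_n(T,\ol b)(h)\|_\ii$ for all $\ol b$ at once (and, incidentally, already in Theorem~\ref{t32} the set $\Om'$ produced depends on $g$ and hence on $\dt$, so an intersection over $\dt=1/k$ is implicitly required there as well). Your remedy---a sequence of decompositions $f=g_k+h_k$ with $\|h_k\|_\ii\le1/k$, Theorem~\ref{t41} applied to each $g_k$ with tolerance $\ep/2^k$, and the intersection $\Om'=\bigcap_k\Om_k'$---is exactly the right fix, and the final Cauchy estimate is clean. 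So the route is the same as the paper's in spirit; you have simply supplied the details the paper leaves to the reader.
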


\begin{cor}
Given $1\leq p<\ii$, Theorem \ref{t41} holds for all $f\in\mc L^p$.
\end{cor}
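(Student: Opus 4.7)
The plan is to mimic the argument of Theorem \ref{t32}, with one twist that accounts for the fact that, unlike $|\lb|=1$ for $\lb\in\mbb C_1$, the bound $|b_k|\leq C_{\ol b}$ of a bounded Besicovitch sequence depends on $\ol b$. A single decomposition $f=g+h$ with one fixed small $\|h\|_\ii$ can no longer absorb the tail term $C_{\ol b}\|h\|_\ii$ simultaneously for every $\ol b\in\mc B$, so I would iterate the decomposition along a scale of $\mc L^\ii$-norms and intersect the resulting good sets.

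Fixing $f\in\mc R_\mu$ and $\ep>0$, I would, for each $l\in\mbb N$, invoke \cite[Proposition 2.1]{ccl} to write $f=g_l+h_l$ with $g_l\in\mc L^1$ and $\|h_l\|_\ii\leq 1/l$. Theorem \ref{t41} applied to $g_l$ with tolerance $\ep/2^l$ then produces $\Om^\pr_l\su\Om$ with $\mu(\Om\sm\Om^\pr_l)\leq\ep/2^l$ such that $\{M_n(T,\ol b)(g_l)\chi_{\Om^\pr_l}\}$ converges uniformly for every $\ol b\in\mc B$. Setting $\Om^\pr=\bigcap_{l=1}^\ii\Om^\pr_l$ yields $\mu(\Om\sm\Om^\pr)\leq\ep$.

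To establish uniform convergence on $\Om^\pr$, I would fix $\ol b\in\mc B$ with $|b_k|\leq C$ and $\dt>0$, and choose $l$ so large that $2C/l<\dt/2$. Since $\Om^\pr\su\Om^\pr_l$, the uniform convergence of the $g_l$-averages yields some $N$ with
\[
\|(M_m(T,\ol b)(g_l)-M_n(T,\ol b)(g_l))\chi_{\Om^\pr}\|_\ii\leq\dt/2\ \ \ \forall\ m,n\ge N,
\]
and the crude estimate $\|M_n(T,\ol b)(h_l)\|_\ii\leq C\|h_l\|_\ii\leq C/l$ combines, via the triangle inequality, with the preceding bound to produce the Cauchy bound $\dt$ for $\{M_n(T,\ol b)(f)\chi_{\Om^\pr}\}$.

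The only genuine obstacle is the $\ol b$-dependence of $C_{\ol b}$: the key is to keep the decomposition index $l$ chosen \emph{after} both $C_{\ol b}$ and $\dt$ are known, which is precisely what the intersection $\bigcap_l\Om^\pr_l$ enables. Beyond this structural point, the remaining estimates are the same triangle-inequality bookkeeping as in Theorem \ref{t32}.
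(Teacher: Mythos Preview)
Your proof is correct and follows the same decomposition strategy as the paper: the paper derives the Corollary immediately from Theorem~\ref{t42} via the inclusion $\mc L^p\su\mc R_\mu$, and for Theorem~\ref{t42} writes only ``As in Theorem~\ref{t32}.'' You have been more careful than the paper on one point: because the bound $C_{\ol b}$ on a Besicovitch sequence is not uniform in $\ol b$, a single splitting $f=g+h$ with one fixed $\|h\|_\ii$ cannot control the tail term for all $\ol b$ simultaneously, and your countable intersection $\Om^\pr=\bigcap_l\Om^\pr_l$ over the scale $\|h_l\|_\ii\le 1/l$ is exactly what is needed to manufacture a single good set independent of $\ol b$ and $\dt$. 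The paper's terse ``as in Theorem~\ref{t32}'' leaves this step implicit (and in fact the written proof of Theorem~\ref{t32} already fixes $\dt$ before choosing $\Om^\pr$, so the same intersection device is tacitly required there too).
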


Note that when $\mu(\Om)=\ii$, there are functions in $\mc R_\mu$ that do not belong to any of the spaces $\mc L^p$, $1\leq p<\ii$, but lie in a classical function Banach space $X$ such as Orlicz, Lorentz, or Marcinkiewicz with $\mathbf 1=\chi_\Om\notin X$. If $\mathbf 1\notin X$, then $X\su\mc R_\mu$ by \cite[Proposition 6.1]{ccl}, hence Theorems \ref{t32} and \ref{t42} hold for any $f\in X$. For conditions that warrant $\mathbf 1\notin X$ when $X$ is an Orlicz, Lorentz, or Marcinkiewics space, that is, for applications of Theorems \ref{t32} and \ref{t42} to these spaces, see \cite[Section 5]{cl}.

\end{document}